\newcommand{\mmp}{\mathbb{P}}
\newcommand{\od}{\overset{d}{=}}
\newcommand{\tp}{\overset{P}{\to}}
\newcommand{\me}{\mathbb{E}}
\newcommand{\mz}{\mathbb{Z}}
\newcommand{\mr}{\mathbb{R}}
\newcommand{\mn}{\mathbb{N}}
\newcommand{\lin}{\underset{n\to\infty}{\lim}}
\newcommand{\ve}{\varepsilon}
\newcommand{\mbR}{{\mathbb R}}
\newcommand{\mbN}{{\mathbb N}}
\newcommand{\sign}{\mathop{\mathrm{sign}}}
\newcommand{\wt}{\widetilde}
\newcommand{\cF}{{\cal F}}
\newcommand{\cE}{{\cal E}}
\newcommand{\be}{\begin{equation}}
\newcommand{\ee}{\end{equation}}
\DeclareMathOperator{\1}{\mathbbm{1}}
\newtheorem{thm}{Theorem}[section]
\newtheorem{lem}[thm]{Lemma}
\newtheorem{assertion}[thm]{Proposition}
\theoremstyle{definition}
\theoremstyle{remark}
\newtheorem{rem}[thm]{Remark}
\begin{document}
\title{A functional limit theorem for locally perturbed random walks}
%\date{\today}
%
\author{Alexander Iksanov\footnote{Faculty of Cybernetics, Taras Shevchenko National University of Kyiv, Ukraine\newline e-mail: iksan@univ.kiev.ua} \ \
and \ \ Andrey Pilipenko\footnote{Institute of Mathematics,
National Academy of Sciences of Ukraine, Kyiv, Ukraine\newline
e-mail: pilipenko.ay@yandex.ua} }
\maketitle
\begin{abstract}
\noindent A particle moves randomly over the integer points of the
real line. Jumps of the particle outside the membrane (a fixed
"locally perturbating set") are i.i.d., have zero mean and finite
variance, whereas jumps of the particle from the membrane have
other distributions with finite means which may be different for
different points of the membrane; furthermore, these jumps are
mutually independent and independent of the jumps outside the
membrane. Assuming that the particle cannot jump over the membrane
we prove that the weak scaling limit of the particle position is a
skew Brownian motion with parameter $\gamma\in [-1,1]$. The path
of a skew Brownian motion is obtained by taking each excursion of
a reflected Brownian motion, independently of the others, positive
with probability $2^{-1}(1+\gamma)$ and negative with probability
$2^{-1}(1-\gamma)$. To prove the weak convergence result we offer
a new approach which is based on the martingale characterization
of a skew Brownian motion. Among others, this enables us to
provide the explicit formula for the parameter $\gamma$. In the
previous articles the explicit formulae for the parameter have
only been obtained under the assumption that outside the membrane
the particle performs unit jumps.
\end{abstract}
\noindent Keywords: functional limit theorem; locally perturbed
random walk; martingale characterization; skew Brownian motion

\noindent AMS MSC 2010: Primary 60F17, Secondary 60G50.

\section{Introduction and main result}

Denote by $D:=D[0,\infty)$ the Skorokhod space of right-continuous
real-valued functions which are defined on $[0,\infty)$ and have
finite limits from the left at each point of the domain. We
stipulate hereafter that $\Rightarrow$ denotes weak convergence of
probability measures on $D$ endowed with the Skorokhod
$J_1$-topology.

For $x\in\mr$ and $(\xi_i)_{i\in\mn}$ a sequence of independent
identically distributed (i.i.d.) random variables which take
integer values and have zero mean and finite variance
$\sigma^2>0$, set $$S(0):=x,\quad S(n):=x+\xi_1+\ldots+\xi_n,\quad
n\in\mn.$$ Donsker's theorem states that
\begin{equation}\label{0}
U_n \ \Rightarrow \ W, \ \ n\to\infty,
\end{equation}
where $U_n(\cdot):=\sigma^{-1}n^{-1/2}S([n\cdot])$ and
$W:=(W(t))_{t\geq 0}$ is a Brownian motion. Like many other
authors (see references below and \cite{Paulin+Szasz:2010}) we are
interested in how the presence of a local perturbation of $(S(n))$
may influence \eqref{0}.

To define a local perturbation, we need more notation. Fix any
$m\in\mbN$ and set $A:=\{-m,-m+1,\dots,m\}$. For $j\in A$, denote
by $\eta_j$, $(\eta_{j,k})_{k\in\mn}$ i.i.d. integer-valued random
variables with a distribution that may depend on $j$. It is
assumed that the so defined random variables are independent of
$(\xi_i)$ and that $\eta_i$ and $\eta_j$ are independent whenever
$i\neq j$. For $x\in\mz$, define a random sequence
$(X(n))_{n\in\mn_0}$ by
$$
X(0)=x, \ \ X(n)=x+\sum_{k=1}^n \left(
\xi_k\1_{\{|X(k-1)|>m\}}+\sum_{|j|\leq
m}\eta_{j,k}\1_{\{X(k)=j\}}\right),\quad n\in\mn.
$$
Note that $(X(n))_{n\in\mn_0}$ is a homogeneous Markov chain with the transition probabilities
$$
p_{ij}:=
\begin{cases}
\mmp\{\xi=j-i\}, \ |i|> m;\\
\mmp\{\eta_i=j\}, \ |i|\leq m.
\end{cases}
$$
Assuming that the Markov chain $(X(n))_{n\in\mn_0}$ is irreducible\footnote{Here is a simple sufficient condition for irreducibility: $\mmp\{\xi_1=1\}>0$, $\mmp\{\xi_1=-1\}>0$, $\mmp\{\eta_j=1\}>0$ and $\mmp\{\eta_j=-1\}>0$ for all $j\in A$.}, set $\alpha_0:=0$,
$$
\alpha_{k}:=\inf\{i>\alpha_{k-1}: \ X(i)\in A \}, \ k\in\mn
$$
and $Y(k):=X(\alpha_k)$, $k\in\mn_0$. The sequence
$(Y(k))_{k\in\mn}$ is an irreducible homogeneous Markov chain.
Denote by $\pi:=(\pi_i)_{i\in A}$ its unique stationary
distribution. Note that $\pi_i>0$ for all $i\in A$. In the sequel
we shall use the standard notation $\me_\pi(\cdot):=\sum_{i\in
A}\pi_i\me (\cdot|Y(0)=i)$.

Recall that a {\it skew Brownian motion}
$W_\beta:=(W_\beta(t))_{t\geq 0}$ with parameter $\beta\in[-1,1]$
is a continuous Markov process with $W_\beta(0)=0$ and the
transition density
\begin{equation*}
p_t(x,y) = \varphi_t (x-y) + \beta\sign(y)\varphi_t (|x|+|y|),\
x,y \in \mathbb{R},
\end{equation*}
where $\varphi_t(x) = \frac{1}{\sqrt{2\pi t}} e^{-x^2/2t}$ is the
density of the normal distribution with zero mean and variance $t$
(see, for instance,  \cite{Lejay:2006}). The latter formula
enables us to conclude that $W_0$, $W_1$ and $W_{-1}$ have the
same distributions as $W$, $|W|$ and $-|W|$, respectively.

Our main result is given next.

\begin{thm}\label{thm_main}
In addition to all the aforementioned conditions assume that $\me
|\eta_j|<\infty$ for all $j\in A$ and that $|\xi_1|\leq 2m+1$ almost surely.
Then
$$
X_n\quad \Rightarrow\quad \sigma W_\gamma,\quad n\to\infty,
$$
where $X_n(t):=X([nt])/\sqrt{n}$ and $\gamma:= {\me_\pi
\big(X(1)-X(\alpha_1)\big)\over \me_\pi |X(1)-X(\alpha_1)|}$.
\end{thm}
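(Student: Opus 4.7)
The plan is to use the martingale characterization of skew Brownian motion announced in the abstract. A continuous process $Z$ starting at zero has the law of $\sigma W_\gamma$ if and only if $Z$ is a semimartingale admitting the decomposition $Z_t = \wt M_t + \sigma^2\gamma L_t^0(Z)$, where $\wt M$ is a continuous martingale with $\langle \wt M\rangle_t = \sigma^2 t$ and $L_t^0(Z)$ is the symmetric local time of $Z$ at zero. The proof then breaks into: (i) tightness of $\{X_n\}$ in $(D,J_1)$; (ii) Doob-decomposing $X$ and passing to the limit; (iii) identifying the limiting drift with $\sigma^2\gamma L_t^0(Z)$, which is where the explicit formula for $\gamma$ materializes.

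For tightness, I would first truncate each $\eta_j$ at a high level $K$; since the membrane $A$ is visited $O(\sqrt n)$ times before time $n$ and $\mmp(|\eta_j|>K)$ can be made small, the truncated and untruncated processes agree with high probability at scale $\sqrt n$. After truncation all increments are uniformly bounded, and Aldous' criterion applied at the stopping times $\alpha_k$ yields tightness of $X_n$ in $(D[0,\infty),J_1)$. Next, write $X(k) = M_X(k) + A_X(k)$ via the Doob decomposition with $A_X(k) = \sum_{j<k}b(X(j))$, $b(i) := (\me\eta_i - i)\1_{\{i\in A\}}$, and $M_X$ a martingale. The conditional variance of $M_X$'s increments is $\sigma^2$ off $A$ and bounded on $A$, so the $A$-contribution to $\langle M_X\rangle$ is negligible after normalization and $\langle M_X\rangle_{[nt]}/n \to \sigma^2 t$. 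The functional martingale CLT then gives $M_X([n\cdot])/\sqrt n \Rightarrow \sigma B$.

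For the drift, write $A_X([nt])/\sqrt n = \sum_{i\in A}b(i)\,N_i([nt])/\sqrt n$ with $N_i(n) := \#\{j<n: X(j) = i\}$, and let $N(n) := \sum_{i\in A}N_i(n)$. The ergodic theorem for the induced chain $(Y(k))$ gives $N_i(n)/N(n) \to \pi_i$ a.s., hence asymptotically $A_X([n\cdot])/\sqrt n \approx \big(\sum_i\pi_i b(i)\big)\,N([n\cdot])/\sqrt n = \me_\pi(X(1) - X(\alpha_1))\cdot N([n\cdot])/\sqrt n$; the last equality uses $\me_\pi X(\alpha_1) = \sum_i i\pi_i$ by stationarity of $\pi$ for $Y$. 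The analogous argument for the total excursion size $E(n) := \sum_{k:\alpha_k \leq n}|X(\alpha_{k-1}+1) - X(\alpha_k)|$ gives $E([n\cdot])/\sqrt n \approx \me_\pi|X(1) - X(\alpha_1)|\cdot N([n\cdot])/\sqrt n$. A Tanaka-type martingale decomposition of $|X_n|$ separately identifies the limit of $E([n\cdot])/\sqrt n$ as $\sigma^2 L_t^0(Z)$ for $Z$ any subsequential weak limit; here the constraint $|\xi_1|\leq 2m+1$ is crucial, as it prevents the walk from jumping across $A$ so that each excursion has a well-defined sign and the Tanaka formula carries over cleanly. Combining, $N([n\cdot])/\sqrt n \to \sigma^2 L_t^0(Z)/\me_\pi|X(1) - X(\alpha_1)|$, and therefore $A_X([n\cdot])/\sqrt n \Rightarrow \sigma^2\gamma L_t^0(Z)$ with $\gamma$ exactly as claimed; the martingale characterization then forces $Z \od \sigma W_\gamma$.

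The main obstacle is the Tanaka identification of $E([n\cdot])/\sqrt n$ with $\sigma^2 L^0$: without it the drift can only be expressed as a constant multiple of some unspecified local-time-like functional of $Z$, and the explicit formula for $\gamma$ would not follow. This is where the martingale framework really pays off, because the same discrete Doob decomposition applied simultaneously to $X_n$ and $|X_n|$ produces both limits in one stroke, and the ratio of the two limiting drift coefficients recovers $\gamma$ without ever invoking excursion theory directly.
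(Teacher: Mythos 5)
Your skeleton---a martingale characterization of skew Brownian motion, a splitting of $X$ into a martingale plus a membrane-supported drift, and the ergodic theorem for the induced chain $(Y(k))$ on $A$ to produce the ratio defining $\gamma$---is essentially the paper's strategy, and your identity $\sum_{i\in A}\pi_i b(i)=\me_\pi\big(X(1)-X(\alpha_1)\big)$ is the same computation as Lemma \ref{strong}. However, two of your steps do not go through as written. First, your Doob decomposition puts the \emph{centred} membrane jumps $\eta_j-\me\eta_j$ into the martingale part $M_X$, and you assert that the conditional variance of its increments is ``bounded on $A$.'' The hypothesis is only $\me|\eta_j|<\infty$; the variance of $\eta_j$ may be infinite, so $M_X$ need not be square-integrable and the functional martingale CLT does not apply to it. Carrying your truncation at level $\ve\sqrt n$ through the martingale part only bounds the membrane contribution to $\langle M_X\rangle_{[nT]}/n$ by a constant times $\ve$ (since $\me(\eta_j^2\1_{\{|\eta_j|\le \ve\sqrt n\}})\le \ve\sqrt n\,\me|\eta_j|$ and $A$ is visited $O(\sqrt n)$ times), which forces an additional $\ve\to 0$ limit you have not set up. The paper sidesteps this entirely: its martingales $M^\pm(n)=\pm\sum_k\1_{\{\pm X(k-1)>m\}}\xi_k$ contain only the $\xi$-jumps, and the uncentred membrane jumps are collected in $L^\pm$ and handled purely by first-moment (strong law) arguments. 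Note also that the bound $\me\nu(n)=O(\sqrt n)$ on the number of membrane visits, which you use as an input, is itself a nontrivial lemma (Lemma \ref{rate}).

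Second, the step you yourself flag as the main obstacle---identifying $\lim_n E([n\cdot])/\sqrt n$ with the local time of the limit via a Tanaka decomposition of $|X_n|$---is more delicate than your sketch suggests. The discrete compensator of $|X(n)|$ is not supported on $A$: at sites $i$ with $m<|i|\le 2m$ the walk can land at a point of opposite sign inside $A$, so $\me\big(|X(k)|-|X(k-1)|\,\big|\,X(k-1)=i\big)=2\me(i+\xi_1)^{-}>0$ is possible there; such sites are visited on the same $\sqrt n$ scale as $A$, so these terms contribute at the same order as $E(n)$ and cannot be discarded. Moreover the compensator of $|X|$ at membrane sites need not be nonnegative, so monotonicity of the prospective local time is not automatic from your decomposition. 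These points are repairable (the paper's identity \eqref{eq_predst} does the bookkeeping by exact telescoping over sojourns, leaving a single error term bounded by $m$ rather than $O(\sqrt n)$ boundary terms), but the cleanest fix is the one the paper adopts: use the characterization of Proposition \ref{thmKulik}, in which the role of the local time is played by an \emph{arbitrary} continuous nondecreasing process $V$ split between $X^+$ and $X^-$ in ratio $(1+\gamma):(1-\gamma)$. Then no limit ever has to be identified with a local time; it suffices to prove $L^+_\infty=\frac{1+\gamma}{1-\gamma}L^-_\infty$, which follows from exactly the strong-law computation you describe.
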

\begin{rem}
Since $x$ in the definition of $(X(k))$ is arbitrary, the theorem remains valid if we replace the set $A$ with $A-j=\{-m-j,\ldots,m-j\}$ for any $j\in\mz$.
\end{rem}
\begin{rem}
Since $\me_\pi \big(X(1)-X(\alpha_1)\big)=\sum_{j\in
A}\pi_j\me\eta_j$, the condition $\me\eta_j=0$ for all $j\in A$ ensures that the limit process in Theorem \ref{thm_main} is a Brownian motion.
\end{rem}

Now we review briefly some related papers. The case $A=\{0\}$,
$1-\mmp\{\eta_0=-1\}=\mmp\{\eta_0=1\}=p\in [0,1]$,
$\mmp\{\xi_1=\pm 1\}=1/2$ has received considerable attention
\cite{Cherny+Shiryaev+Yor:2002, Harrison+Shepp:1981, Lejay:2006,
Szasz+Telcs:1981}. In \cite{Harrison+Shepp:1981} it is remarked
(without proof) that if $A$ and the distribution of $\xi_1$ are as
above, whereas $\eta_0$ has an arbitrary distribution which is
concentrated on integers and has a finite mean, then $\gamma=\me
\eta_0/ \me |\eta_0|$. To facilitate comparison of this equality
to the formula for $\gamma$ given in Theorem \ref{thm_main} we
note that in the present situation the stationary distribution
$\pi$ is degenerate at zero. The paper
\cite{Pilipenko+Prihodko:2014} is concerned with the case when
$A=\{0\}$, $\xi_1$ takes integer values (possibly more than two),
has zero mean and finite variance, whereas the distribution of
$\eta_0$ belongs to the domain of attraction of an $\alpha$-stable
distribution, $\alpha\in (0,1)$. The case when $m\in\mn$ is
arbitrary, $\mmp\{\xi_1=\pm 1\}=1/2$, and the variables $\eta_j$
are a.s.\ bounded, is investigated in \cite{Minlos+Zhizhina:1997,
Pilipenko+Prihodko:2011}. In \cite{Yarotsky:1999} the author
assumes that $\xi_1$ is a.s.\ bounded rather than having the
two-point distribution. The article \cite{Pilipenko+Prihodko:2015}
removes the assumption of a.s.\ boundedness of $\eta_j$, still
assuming that the distribution of $\xi_1$ is two-point.

The rest of the paper is structured as follows. In Section
\ref{decomp} we discuss our approach (which seems to be new in the
present context) which is based on the martingale characterization
of a skew Brownian motion. With this being an essential ingredient
the proof of Theorem \ref{thm_main} is finished in Section
\ref{l}. Some technical results are proved in Appendix.

\section{Proof of Theorem \ref{thm_main}}\label{main_proof}

\subsection{Decomposition of perturbed random walk}\label{decomp} We shall use the following martingale characterization of
a skew Brownian motion. Its proof can be found in
\cite{Kulik:2007}, see also \cite{Tsirelson}.
\begin{assertion}\label{thmKulik}
Suppose that a couple $(X,V):=(X_t, V_t)_{t\geq 0}$ of continuous
processes adapted to the filtration $(\cF_t)_{t\geq 0}$ satisfies
the following conditions:

\noindent 1) $V(0)=0$, $V$ is nondecreasing almost surely;

\noindent 2) processes $(M^\pm(t))_{t\geq 0}$ defined by
$$M^\pm(t):=X^\pm(t)-\frac{1\pm \beta}{2} V_t,\quad t\geq 0$$
are continuous martingales (with respect to $(\cF_t)$) with the
predictable quadratic variations
$$
\langle M^+\rangle_t=\int_0^t \1_{\{X_s\geq 0\}} {\rm
d}s\quad\text{и}\quad \langle M^-\rangle_t=\int_0^t \1_{\{X_s\leq
0\}} {\rm d}s,
$$
where $\beta\in[-1,1]$, $X^+_t=X_t\vee 0$ and $X^-_t=X^+_t-X_t$.

\noindent Then $X$ is a skew Brownian motion with parameter
$\beta$.
\end{assertion}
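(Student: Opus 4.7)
My plan is to derive from the hypotheses that $X$ satisfies the Harrison--Shepp equation $X_t = B_t + \beta L^0_t(X)$ for some Brownian motion $B$, where $L^0(X)$ is the symmetric local time of $X$ at $0$, and then invoke the known pathwise uniqueness for this SDE whose unique solution has the law of a skew Brownian motion with parameter $\beta$. The three key ingredients are: identify $B := X - \beta V$ as a Brownian motion; show that $V$ grows only on $\{X = 0\}$; and identify $V$ with $L^0(X)$.

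First, from $X = X^+ - X^-$ and $|X| = X^+ + X^-$ combined with the hypothesised decompositions $X^\pm = M^\pm + \frac{1\pm\beta}{2} V$, one gets
\begin{equation*}
X_t = M^+_t - M^-_t + \beta V_t, \qquad |X_t| = M^+_t + M^-_t + V_t.
\end{equation*}
Thus $B_t := M^+_t - M^-_t = X_t - \beta V_t$ is a continuous local martingale with $\langle B \rangle_t = \langle M^+ \rangle_t + \langle M^- \rangle_t - 2\langle M^+, M^- \rangle_t$. The Kunita--Watanabe inequality bounds the density of $\langle M^+, M^-\rangle$ pointwise by $\sqrt{\1_{\{X_s \geq 0\}} \1_{\{X_s \leq 0\}}} = \1_{\{X_s = 0\}}$, while $\langle M^+\rangle_t + \langle M^- \rangle_t = t + \int_0^t \1_{\{X_s = 0\}} \mathrm{d}s$, so everything reduces to showing that $\{s \leq t : X_s = 0\}$ has zero Lebesgue measure. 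I would obtain this via the occupation-times formula applied to the continuous semimartingale $|X| = (M^+ + M^-) + V$, whose martingale part has an absolutely continuous bracket. Granted this, $\langle B \rangle_t = t$ and L\'evy's theorem identifies $B$ as a standard Brownian motion.

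Second, I would show $V$ increases only on $\{X = 0\}$. On any open interval where $X_s > 0$, $X^-_s \equiv 0$ and hence $M^-_s = -\frac{1-\beta}{2} V_s$; since $V$ is nondecreasing, $M^-$ is monotone there, but a continuous martingale of bounded variation on an interval is constant on it, so $V$ is constant. The symmetric argument with $M^+$ handles $\{X < 0\}$, giving $\int_0^t \1_{\{X_s \neq 0\}} \mathrm{d}V_s = 0$ (a short separate argument being needed in the boundary cases $\beta = \pm 1$). Tanaka's formula applied to $|X|$ then yields
\begin{equation*}
|X|_t = \int_0^t \sign(X_s) \mathrm{d}B_s + \beta \int_0^t \sign(X_s) \mathrm{d}V_s + L^0_t(X),
\end{equation*}
and the second integral vanishes by what we just showed. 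Matching this against $|X|_t = (M^+ + M^-)_t + V_t$ and using the uniqueness of the Doob--Meyer decomposition (the martingale and bounded-variation parts each agree) yields $V_t = L^0_t(X)$. Hence $X_t = B_t + \beta L^0_t(X)$, and pathwise uniqueness for this SDE closes the argument.

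I expect the main obstacle to be showing $\int_0^t \1_{\{X_s = 0\}} \mathrm{d}s = 0$ without circularity: at that stage one does not yet know that $X$ itself is any kind of Brownian-like process, so the argument must extract sufficient regularity purely from the hypotheses that $V$ is continuous nondecreasing and $|X| - V$ is a continuous martingale with absolutely continuous bracket. The alternative would be to bypass this occupation-time argument entirely by working instead with $\widetilde{B}_t := \int_0^t \sign(X_s)\mathrm{d}B_s$ and its quadratic variation, which automatically ignores the set $\{X = 0\}$; this detour is what I would try first if the direct approach ran into trouble.
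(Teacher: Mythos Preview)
The paper does not give its own proof of this proposition: it simply states the result and refers to Kulik (2007) and Tsirelson (1997). So there is nothing in the paper to compare your argument against.

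Your route via the Harrison--Shepp equation $X_t=B_t+\beta L^0_t(X)$ is the standard one and is essentially correct for $|\beta|<1$. Two remarks. First, the occupation-time step you worry about is not circular if you apply it to the semimartingale $X^+=M^++\tfrac{1+\beta}{2}V$ itself rather than to $|X|$: its bracket is $\langle X^+\rangle_t=\langle M^+\rangle_t=\int_0^t \1_{\{X_s\ge 0\}}\,{\rm d}s$, so the occupation-times formula gives
\[
0=\int_0^t \1_{\{X^+_s=0\}}\,{\rm d}\langle X^+\rangle_s=\int_0^t \1_{\{X_s\le 0\}}\1_{\{X_s\ge 0\}}\,{\rm d}s=\int_0^t \1_{\{X_s=0\}}\,{\rm d}s,
\]
with no appeal to properties of $X$ not already in the hypotheses. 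Second, your step ``$V$ grows only on $\{X=0\}$'' is cleanest argued via the brackets rather than via ``bounded-variation martingales are constant'': on any excursion interval where $X>0$ one has ${\rm d}\langle M^-\rangle_s=\1_{\{X_s\le 0\}}\,{\rm d}s=0$, hence $M^-$ is constant there, hence so is $V$ (using $\beta\neq 1$); symmetrically for $X<0$ with $\beta\neq -1$.

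The genuine gap is the boundary case $|\beta|=1$, which you defer to ``a short separate argument''. In fact no such argument exists: as stated, the proposition is \emph{false} for $\beta=\pm 1$. For $\beta=1$, take a Brownian motion $B$, set $V_t:=t-\inf_{s\le t}B_s$ and $X_t:=B_t+V_t$. Then $V$ is continuous nondecreasing with $V_0=0$, $X_t\ge t\ge 0$ so $X^-=0$, $M^+=X-V=B$ has $\langle M^+\rangle_t=t=\int_0^t\1_{\{X_s\ge 0\}}\,{\rm d}s$, and $M^-=0$ has $\langle M^-\rangle_t=0=\int_0^t\1_{\{X_s\le 0\}}\,{\rm d}s$ (since $X_s=0$ only at $s=0$). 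All hypotheses hold, yet $X_t=(B_t-\inf_{s\le t}B_s)+t$ is reflected Brownian motion plus drift, not $|W|$. The missing hypothesis is that $V$ increase only on $\{X=0\}$ (equivalently $\int \1_{\{X_s\neq 0\}}\,{\rm d}V_s=0$); for $|\beta|<1$ you correctly recover this from the other assumptions, but for $|\beta|=1$ it must be assumed. This does not affect the paper's main theorem, since Lemma~\ref{lem_conv} establishes exactly this support condition for the limiting $L^\pm_\infty$, and in the application the two relations $V_\infty=\tfrac{2}{1+\gamma}L^+_\infty=\tfrac{2}{1-\gamma}L^-_\infty$ together force $V_\infty$ to grow only on $\{X_\infty=0\}$.
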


To prove Theorem \ref{thm_main} we decompose the perturbed random
walk $(X(n))$ into the sum of three summands. Roughly speaking,
these are given by the sums of jumps which are accumulated while
$(X(n))$ is staying in the sets $(m,\infty), (-\infty,-m)$ and
$[-m,m]$, respectively. It turns out that the first two summands
are martingales. Furthermore, their scaling limits are the
martingales $M^\pm$ appearing in Proposition \ref{thmKulik} (see
Lemma \ref{lem_conv} below). We analyze the third summand and its
scaling limit in Lemma \ref{lem_conv} and Section \ref{l}.

For convenience we assume that $X(0)=0$. The
general case can be treated similarly. For $n\in\mn_0$, set $\wt
X^\pm(n)=\pm X(n)\1_{\{\pm X(n)>m\}}$. Further, we put $\tau_0^\pm=0$,
$$\sigma_k^\pm=\inf\{i>\tau_k^\pm: \ \pm X(i)>m\},\
\tau_{k+1}^\pm:=\inf\{i>\sigma_k^\pm: \ \pm X(i)\leq m\},\quad
k\in\mn_0.$$

The subsequent presentation is essentially based on the following equality
\begin{eqnarray}\label{eq_predst}
\wt X^\pm (n)&=&\pm \sum_{k=1}^n \1_{\{\pm X(k-1)>m\}}\xi_k\pm
\sum_{i\geq 0}\left(X(\sigma_i^\pm)-X(\tau_i^\pm)
\right)\1_{\{\sigma_i^\pm\leq n\}}\mp \sum_{i\geq
0}X(\tau_i^\pm)\1_{\{\tau_i^\pm\leq n <
\sigma_i^\pm\}}\notag\\&=:& M^\pm(n)+L^\pm(n)\mp \sum_{i\geq 0}
X(\tau_i^\pm)\1_{\{\tau_i^\pm\leq n < \sigma_i^\pm\}}.
\end{eqnarray}
For $n\in\mn_0$, put
$$
M^\pm_n(t):={M^\pm([nt])\over \sqrt{n}},\quad
L^\pm_n(t):={L^\pm([nt])\over \sqrt{n}},\quad t\geq 0.
$$

The proofs of Lemma \ref{lem_comp} and Lemma \ref{lem_conv} given
below are relegated to Appendix.
\begin{lem}\label{lem_comp}
The sequence $(X_n^\pm, M_n^\pm,
L_n^\pm)_{n\in\mn}$ is weakly relatively compact on $D([0,T];
\mbR^6)$ for each $T>0$. Furthermore, each limit point
$(X_\infty^\pm, M_\infty^\pm,   L_\infty^\pm)$ of the sequence is
a continuous process satisfying  \be\label{eq_null} \int_0^T
\1_{\{X_\infty^\pm(t)= 0\}}{\rm d}t=0 \ \mbox{almost surely.} \ee
\end{lem}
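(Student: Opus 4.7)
\noindent\textit{Proof plan.} The approach is to handle tightness marginal by marginal, verify that every joint limit point is continuous, and then deduce the occupation-time property \eqref{eq_null}. Throughout let $\cF_n:=\sigma(X(k):k\le n)$ denote the natural filtration of the walk.

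First, $(M^\pm(n))$ is an $(\cF_n)$-martingale, since its increment $\pm\1_{\{\pm X(n-1)>m\}}\xi_n$ factors into an $\cF_{n-1}$-measurable indicator and the centred, $\cF_{n-1}$-independent $\xi_n$. The predictable quadratic variation satisfies $\langle M^\pm\rangle_n\le\sigma^2 n$ and the increments are bounded by $2m+1$, so Aldous' criterion (or the functional martingale CLT) gives tightness of $(M_n^\pm)$ in $D([0,T])$. Each $(L^\pm(n))$ is pathwise nondecreasing: the hypothesis $|\xi_1|\le 2m+1$ forbids jumping over the membrane, hence $X(\tau_i^\pm)\in A$ and $\pm X(\sigma_i^\pm)>m\ge\pm X(\tau_i^\pm)$. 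Tightness then reduces to a bound on $\me L_n^\pm(T)$: each summand of $L^\pm$ has magnitude at most $2m+|\eta_{X(\sigma_i^\pm-1),\sigma_i^\pm}|$ and so conditional expectation at most $C_0:=2m+\max_{j\in A}\me|\eta_j|<\infty$, while the number $N^\pm(n):=\#\{i:\sigma_i^\pm\le n\}$ of such summands before time $n$ is bounded by $\#\{k\le n:X(k)\in A\}$, whose expectation is $O(\sqrt n)$ by a classical renewal estimate (the return times to $A$ of the zero-mean, finite-variance walk with bounded increments have $1/2$-stable tails). Hence $\me L_n^\pm(T)=O(1)$ and $(L_n^\pm)$ is tight. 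Tightness of $(X_n^\pm)$ then follows from \eqref{eq_predst}, since the residual $\sum X(\tau_i^\pm)\1_{\{\tau_i^\pm\le n<\sigma_i^\pm\}}$ has at most one non-zero summand (the interval $[\tau_i^\pm,\sigma_i^\pm)$ containing $n$) and it is deterministically bounded by $m$.

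To obtain joint relative compactness and continuity of every limit point, I will show that the largest jump of each marginal vanishes in probability. For $M_n^\pm$ the jumps are $\le(2m+1)/\sqrt n\to 0$. For $L_n^\pm$ the largest jump on $[0,T]$ is at most $(2m+\max_i|\eta_i^{(\cdot)}|)/\sqrt n$ taken over the $N^\pm(nT)$ $\eta$-steps, and the hypothesis $\me|\eta_j|<\infty$ gives $\mmp(|\eta_j|>x)=o(1/x)$, so a union bound yields
\[
\mmp\Big(\max_{i\le N^\pm(nT)}|\eta_i^{(\cdot)}|>\ve\sqrt n\Big)\le\me N^\pm(nT)\cdot\max_{j\in A}\mmp(|\eta_j|>\ve\sqrt n)=O(\sqrt n)\cdot o(1/\sqrt n)\to 0.
\]
The jumps of $\wt X_n^\pm$ (and hence of $X_n^\pm$) are dominated by those of $M_n^\pm$ and $L_n^\pm$, so all six marginal limits are continuous. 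Joint tightness in $D([0,T];\mbR^6)$ then follows from marginal tightness plus continuity of the limits.

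Finally, interpreting $\{X_\infty^\pm(t)=0\}$ as $\{X_\infty^+(t)=0\}\cap\{X_\infty^-(t)=0\}=\{X_\infty(t)=0\}$ (which is the form required in Proposition \ref{thmKulik}), property \eqref{eq_null} will follow from a uniform-in-$n$ anti-concentration bound
\[
\mmp(|X(k)|\le\ve\sqrt n)\le C\ve\,(1+\sqrt{n/k}),\qquad k\ge 1.
\]
Integrating against $t=k/n\in[0,T]$ yields $\me\int_0^T\1_{\{|X_n(t)|\le\ve\}}\,{\rm d}t\le C'\ve$ uniformly in $n$; passing to the weak limit along the chosen subsequence (via Skorokhod representation to reduce to an almost-sure argument, combined with Fatou) preserves the bound, and sending $\ve\to 0$ gives \eqref{eq_null}. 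Establishing the anti-concentration estimate despite the inhomogeneity near the membrane is the main technical obstacle; my plan is to write $X(k)=M^+(k)-M^-(k)+R(k)$ via \eqref{eq_predst}, with $|R(k)|$ controlled by the already-bounded quantities $L^\pm(k)$ and the $O(1)$ residual, and to derive the density bound at zero from a local-limit-theorem-type estimate for the martingale $M^+-M^-$ (whose quadratic variation is $\sigma^2$ times the time spent outside $A$, i.e.\ $k-O_P(\sqrt k)$).
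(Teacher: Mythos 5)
Your plan has two genuine gaps, both at points where the paper uses a specific device you have not reproduced.

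First, the tightness of $(L_n^\pm)$ is not established by what you propose. For a sequence of nondecreasing processes starting at $0$, a uniform bound $\me L_n^\pm(T)=O(1)$ together with vanishing maximal jumps does \emph{not} imply $C$-tightness in $D([0,T])$: a process that climbs by a constant amount over a window of length $1/n$ through $\sqrt n$ jumps of size $1/\sqrt n$ satisfies both conditions and converges to a step function (indeed it is not even $J_1$-tight). To rule this out you must control $\sup_{|t-s|\le\delta}\bigl(L_n^\pm(t)-L_n^\pm(s)\bigr)$, and first-moment bounds on increments of order $\sqrt{\delta}$ are too weak to survive the union bound over $T/\delta$ windows. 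Since you then derive tightness of $(X_n^\pm)$ from that of $(L_n^\pm)$ via \eqref{eq_predst}, the gap propagates to the whole lemma. The paper runs the decomposition in the opposite direction: it proves $C$-tightness of $(X_n)$ \emph{directly}, by coupling $(X(k))$ with the explicit functional $X^\ast(k)=S(r(k))+\sum_{|j|\le m}\sum_{i\le r_j(k)}\eta_{j,i}$ of the unperturbed walk, and establishing the pathwise oscillation bound $\sup_{|i-j|\le[\delta n]}|X^\ast(i)-X^\ast(j)|\le 2m+2\sup_{|i-j|\le[\delta n]}|S(i)-S(j)|+\max_{l,k}|\eta_{l,k}|$ (inequality \eqref{22}); tightness of $(M_n)$ comes from $M^\ast(n)=S(r(n))-x$, and only then is $(L_n^\pm)$ read off from \eqref{eq_predst} for free. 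Some version of this coupling (or another direct control of the oscillation of $X_n$) is needed; your marginal-by-marginal scheme does not close.

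Second, your route to \eqref{eq_null} via a uniform anti-concentration bound $\mmp\{|X(k)|\le\ve\sqrt n\}\le C\ve(1+\sqrt{n/k})$ for the prelimit chain is only announced, and you yourself flag it as the main obstacle; proving a local-limit-type estimate for this inhomogeneous chain near the membrane would be a substantial piece of work. The paper avoids any prelimit estimate: writing $X_\infty=M_\infty+L_\infty$, it identifies $M_\infty$ as a Brownian motion (again from $M^\ast(n)=S(r(n))-x$ and $r(n)/n\to1$, a consequence of $\me\nu(n)=O(\sqrt n)$), observes that $L_\infty$ is continuous of locally bounded variation so that $\langle X_\infty\rangle_t=\langle M_\infty\rangle_t$ is linear in $t$, and then gets $\int_0^T\1_{\{X_\infty(t)=0\}}{\rm d}t=0$ immediately from the occupation-time formula for continuous semimartingales. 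This is the missing idea; I would strongly recommend adopting it, as it replaces your hardest step by a one-line application of Corollary~1.6 of Chapter~6 in \cite{Revuz+Yor:1999}. The parts of your proposal that do work --- the martingale tightness of $(M_n^\pm)$ via bounded increments, and the bound $\mmp\{\max|\eta|>\ve\sqrt n\}\le\me N\cdot o(1/\sqrt n)\to0$ using $\me\nu(n)=O(\sqrt n)$ and $\me|\eta_j|<\infty$ --- coincide in substance with the paper's, though for the latter you should justify the Wald-type step by noting that the event that the $k$-th visit to site $l$ occurs before time $[nT]$ is independent of $\eta_{l,k}$.
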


\begin{lem}\label{lem_conv}
Let $(n_k)$ be a sequence such that
$$(X_{n_k}^\pm, M_{n_k}^\pm, L_{n_k}^\pm)\quad
\Rightarrow\quad (X_\infty^\pm, M_\infty^\pm, L_\infty^\pm),\quad
k\to\infty$$ on $D([0,T]; \mbR^6)$ for some $T>0$. Then

\noindent 1) the processes $L_\infty^\pm$ are nondecreasing almost surely and satisfy
\be\label{eq_XL} \int_0^T
\1_{\{X_\infty^\pm(t)> 0\}}{\rm d} L_\infty^\pm(t)=0\quad
\mbox{almost surely}; \ee

\noindent 2) the processes $(M_\infty^\pm(t))_{t\in[0,\,T]}$ are
continuous martingales with respect to the filtration
$(\cF_t)_{t\in [0,\,T]}$, where $\cF_t:=\sigma(X_\infty^\pm(s),
M_\infty^\pm(s), L_\infty^\pm(s), \ s\in [0,t])$, with the
predictable quadratic variations \be\label{eq_SX} \langle
M^\pm_\infty\rangle_t= \sigma^2
\int_0^t\1_{\{X^\pm_\infty(s)>0\}}{\rm d}s. \ee
\end{lem}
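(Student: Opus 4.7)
My plan is to invoke Skorokhod's representation theorem so that the convergence $(X_{n_k}^\pm, M_{n_k}^\pm, L_{n_k}^\pm)\to(X_\infty^\pm, M_\infty^\pm, L_\infty^\pm)$ holds almost surely in $D([0,T];\mbR^6)$. Since the limit is continuous by Lemma \ref{lem_comp}, this becomes uniform convergence on $[0,T]$ a.s., which is the platform for both parts.

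For part 1), each $L_n^+$ is non-decreasing -- every summand $X(\sigma_i^+)-X(\tau_i^+)$ is strictly positive since $X(\sigma_i^+)>m\geq X(\tau_i^+)$ -- and monotonicity passes to the uniform limit. For \eqref{eq_XL}, I would exploit that every jump of $L_n^+$ happens at some $\sigma_i^+/n$ at which the left limit of $X_n^+$ is $0$, while the post-jump value $X(\sigma_i^+)/\sqrt{n}$ is bounded by $(m+|\eta_{i^*}|)/\sqrt{n}$: the hypothesis $|\xi_1|\leq 2m+1$ forces $X(\sigma_i^+-1)\in[-m,m]$, so the walk lands in $\{X>m\}$ only from the membrane and via some membrane jump $\eta_{i^*}$. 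Testing against a continuous approximation $\varphi_\varepsilon$ of $\1_{\{x>\varepsilon\}}$, the joint weak convergence and continuity of $L_\infty^+$ give $\int_0^T\varphi_\varepsilon(X_\infty^+)\,dL_\infty^+=\lim_{k}\int_0^T\varphi_\varepsilon(X_{n_k}^+)\,dL_{n_k}^+$; the right-hand side is supported on indices $i$ with $|\eta_{i^*}|\gtrsim\varepsilon\sqrt{n}$, and an aggregate first-moment truncation based on $\me|\eta_j|<\infty$ shows it vanishes. Sending $\varepsilon\downarrow 0$ yields \eqref{eq_XL}.

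For part 2), $M^+(n)=\sum_{k=1}^n\1_{\{X(k-1)>m\}}\xi_k$ is a square-integrable martingale with predictable quadratic variation $\sigma^2\sum_{k=1}^n\1_{\{X(k-1)>m\}}$. The bound $|\xi_k|\leq 2m+1$ makes the jumps of $M_n^+$ uniformly $O(1/\sqrt{n})$, so $M_\infty^+$ is continuous. Uniform $L^2$-boundedness $\me M_n^+(t)^2\leq\sigma^2 T$ combined with the Skorokhod representation lets us pass from the discrete martingale identity $\me[\Phi_s(M_n^+(t)-M_n^+(s))]=0$ -- valid for bounded continuous $\Phi_s$ depending on the six-tuple $(X_n^\pm,M_n^\pm,L_n^\pm)$ at finitely many times $\leq s$ -- to the analogous identity for $M_\infty^+$ with respect to $\cF_t$. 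For \eqref{eq_SX}, the rescaled predictable quadratic variation is
\begin{equation*}
\langle M_n^+\rangle_t=\frac{\sigma^2}{n}\sum_{k=1}^{[nt]}\1_{\{X_n^+((k-1)/n)>m/\sqrt{n}\}},
\end{equation*}
which converges to $\sigma^2\int_0^t\1_{\{X_\infty^+(s)>0\}}\,ds$ by uniform convergence, $m/\sqrt{n}\to 0$, and the zero-Lebesgue-measure property \eqref{eq_null} of $\{X_\infty^+=0\}$; uniqueness of the Doob--Meyer decomposition for continuous $L^2$-martingales identifies this limit as $\langle M_\infty^+\rangle_t$. The $-$ case is symmetric.

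The principal obstacle will be \eqref{eq_XL}: with only $\me|\eta_j|<\infty$, individual jumps of $L_n^+$ can reach sizes of order $\sqrt{n}$ with small probability, so showing that the post-jump values of $X_n^+$ contribute negligibly to $\int\varphi_\varepsilon(X_n^+)\,dL_n^+$ requires a careful truncation of $\eta_{i^*}$ at scale $\varepsilon\sqrt{n}$ together with first-moment Markov estimates on the exceedances, leveraging stationarity of the embedded chain $(Y(k))$.
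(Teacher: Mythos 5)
Your proposal is correct and follows essentially the same route as the paper: Skorokhod representation plus continuity of the limits to upgrade to locally uniform convergence, approximation of $\1_{(0,\infty)}$ by continuous functions for \eqref{eq_XL}, and passage to the limit in the discrete martingale identities (with uniform integrability from moment bounds on the bounded jumps $\xi$) together with \eqref{eq_null} to identify the quadratic variation. The only real divergence is in \eqref{eq_XL}: the paper simply asserts that the prelimit integrals vanish (which is exact if the integrand is read at the pre-jump value, since $X_n^+(s-)\leq m/\sqrt{n}$ at every increase time of $L_n^+$), whereas you evaluate at the post-jump value and therefore need the truncation of $\eta$ at scale $\ve\sqrt{n}$ combined with the $O(\sqrt{n})$ bound on membrane visits --- a workable but avoidable detour.
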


\subsection{Analysis of the processes $L_\infty^\pm$}\label{l}

If we can prove that \be L^+_\infty(t)={1+\gamma\over 1-\gamma}
L^-_\infty(t) \ \mbox{a.s.},\label{important} \ee then using
\eqref{eq_predst}, Lemma \ref{lem_comp} and the fact that the
absolute value of the last summand in \eqref{eq_predst} does not
exceed $m$ we conclude that
$$
X_\infty^\pm(t)=M_\infty^\pm(t)+\frac{1\pm
\gamma}{2}V_\infty(t),\quad t\geq 0\quad\text{a.s.},
$$
where
$$V_\infty(t):= {2\over 1+\gamma} L^+_\infty(t)={2\over 1-\gamma}
L^-_\infty(t).
$$
By Lemma \ref{lem_conv} and Proposition \ref{thm_main} $X$ is then
a skew Brownian motion with parameter $\gamma$.

Recalling the notation
$$
\alpha_0:=0,\quad \alpha_{k}:=\inf\{i>\alpha_{k-1}: \ X(i)\in
A\},\quad k\in\mn
$$
and $Y(n)=X(\alpha_n)$, $n\in\mn$, set $$\rho^\pm_k:=\pm
\big(Y(k+1)-Y(k)\big)\1_{\{\pm X(\alpha_k+1)\leq m\}}\pm
\big(X(\alpha_k+1)-Y(k)\big)\1_{\{\pm X(\alpha_k+1)>m\}},\quad
k\in\mn.$$
\begin{lem}\label{strong}
The following limit relation
$$\lin {\sum_{k=1}^n \rho^\pm_k\over
n}=\me_\pi\big(X(1)-X(\alpha_1)\big)^\pm$$ holds almost surely.
\end{lem}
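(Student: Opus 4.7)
The plan is to rewrite each $\rho^\pm_k$ so that summation telescopes a drift correction, and then to apply a regenerative strong law of large numbers to the resulting cycle sums.

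The starting point is the pointwise identity
\[
\rho^\pm_k \;=\; (X(\alpha_k + 1) - X(\alpha_{k+1}))^\pm \;\pm\; (Y(k+1) - Y(k)),
\]
valid for every $k\in\mn$, which I would verify by case analysis on whether $X(\alpha_k + 1) \in A$, $X(\alpha_k + 1) > m$, or $X(\alpha_k + 1) < -m$ (the hypothesis $|\xi_1| \leq 2m + 1$ ensures $X(\alpha_{k+1}) \in A$ in the last two cases). Summing over $k = 1, \ldots, n$, the correction term telescopes to $\pm(Y(n+1) - Y(1))$, which is deterministically bounded by $2m$. Consequently, after dividing by $n$, the claim reduces to showing
\[
\lim_{n\to\infty} \frac{1}{n} \sum_{k=1}^n (X(\alpha_k+1) - X(\alpha_{k+1}))^\pm \;=\; \me_\pi (X(1) - X(\alpha_1))^\pm \quad \text{a.s.}
\]

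For this, since $(Y(k))$ is irreducible on the finite set $A$, it is positive recurrent. Fix a reference state $i_0 \in A$, let $T_n$ denote the $n$-th return of $(Y(k))$ to $i_0$, and note that the times $\alpha_{T_n}$ are stopping times for $X$. By the strong Markov property of $X$, the segments of $X$ between consecutive $\alpha_{T_n}$'s are i.i.d., hence the cycle sums
\[
\Xi_n^\pm := \sum_{k=T_{n-1}}^{T_n - 1} (X(\alpha_k + 1) - X(\alpha_{k+1}))^\pm
\]
are i.i.d.\ with $\me|\Xi_1^\pm| \leq \me T_1 \bigl(2m + \max_{j \in A} \me|\eta_j|\bigr) < \infty$, using the uniform bound $|(X(\alpha_k+1) - X(\alpha_{k+1}))^\pm| \leq 2m + |\eta_{Y(k),k+1}|$ together with $\me T_1 = 1/\pi_{i_0}$. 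The classical SLLN applied to $(\Xi_n^\pm)$ and to the cycle lengths $(T_n - T_{n-1})$, a routine sandwiching of an arbitrary $n$ between consecutive $T_N$, and the standard ratio identity $\me\Xi_1^\pm / \me T_1 = \me_\pi(X(1) - X(\alpha_1))^\pm$ then complete the proof.

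The main obstacle is the case-by-case verification of the pointwise identity, which requires careful bookkeeping around the definition of $\rho^\pm_k$; the regeneration argument itself is a textbook application of the SLLN for irreducible recurrent Markov chains on a finite state space.
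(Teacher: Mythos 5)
Your proposal is correct and rests on the same engine as the paper's proof: regeneration of $(Y(k))$ at its returns to a fixed state, i.i.d.\ cycle sums with finite mean, the classical SLLN plus sandwiching, and the occupation-measure identity $\me_x\sum_{k=0}^{T_1-1}g(\cdot)=\me_xT_1\sum_{y}\pi_y\me_y g$ (the paper cites Theorem 8.2 of Durrett for this). The only difference is where the positive/negative part is extracted: you do it pathwise up front via the telescoping identity $\rho^\pm_k=(X(\alpha_k+1)-X(\alpha_{k+1}))^\pm\pm(Y(k+1)-Y(k))$ (which does check out by the same case analysis), whereas the paper applies the regenerative SLLN to $\sum_k\rho^\pm_k$ directly and performs the equivalent algebra inside the computation of $\me_x\theta_1(x)$, using $\me_\pi\big(Y(1)-Y(0)\big)=0$.
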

The proof of the lemma is postponed until Appendix.

In view of
\begin{eqnarray*}
\bigg|L^\pm(n)-\sum_{k:\,\alpha_k\leq n}\rho^\pm_k\bigg|&=&
\bigg|\pm \sum_{i\geq
0}\big(X(\sigma_i^\pm)-X(\tau_i^\pm)\big)\1_{\{\sigma_i^\pm\leq
n\}}\\&\mp& \sum_{k:\,\alpha_k\leq n}
\bigg(\big(Y(k+1)-Y(k)\big)\1_{\{\pm X(\alpha_k+1)\leq m\}}\\&+&
\big(X(\alpha_k+1)-Y(k)\big)\1_{\{\pm X(\alpha_k+1)>m\}}
\bigg)\bigg|\\&\leq& 2m
\end{eqnarray*}
and Lemma \ref{rate}(a) we can invoke Lemma \ref{strong} to infer
$$
\lim_{n\to\infty}{L^+(n)\over L^-(n)} = {\me_\pi
(X(1)-X(\alpha_1))^+\over \me_\pi
(X(1)-X(\alpha_1))^-}={1+\gamma\over 1-\gamma}\quad \mbox{a.s.},
$$
thereby proving \eqref{important}. The proof of Theorem
\ref{thm_main} is complete.

\section{Appendix}

We start with an auxiliary result. For $n\in\mn_0$, denote by
$\nu(n)$ the sojourn time in $A$ of $(X(k))_{0\leq k\leq n}$,
i.e.,
$$\nu(n):=\sum_{k=0}^n\1_{\{|X(k)|\leq m\}}.$$
\begin{lem}\label{rate}

\noindent (a) $\lin \nu(n)=\infty$ almost surely;

\noindent (b)  $\me \nu(n)=O(\sqrt{n})$ as $n\to\infty$.
\end{lem}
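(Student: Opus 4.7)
My plan is to prove (a) by a direct recurrence argument and (b) via a regenerative decomposition combined with a truncated Wald argument. For (a), outside $A$ the chain $(X(n))$ evolves as a centered, finite-variance random walk, which is recurrent on $\mz$ by Chung--Fuchs, so from any $x\notin A$ it reenters the finite set $A$ in an a.s.\ finite number of steps. The strong Markov property then gives $\alpha_k<\infty$ a.s.\ for every $k$, hence $\nu(n)\to\infty$ almost surely. For (b), fix $j_0\in A$ with $\mmp\{\eta_{j_0}\notin A\}>0$ (such a state exists, since otherwise $(X(n))$ could not leave $A$, contradicting irreducibility) and work under $\mmp_{j_0}$; a general starting point contributes only a bounded pre-regeneration term by (a). Let $R_0:=0$, $R_i:=\inf\{k>R_{i-1}:X(k)=j_0\}$, $L_i:=R_i-R_{i-1}$, and $V_i:=\#\{k\in(R_{i-1},R_i]: X(k)\in A\}$. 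The strong Markov property at $R_i$ makes $(L_i,V_i)_{i\geq 1}$ i.i.d., and $V_1$ equals the return time to $j_0$ of the induced chain $(Y(k))$, which has geometric tails since $(Y(k))$ is irreducible on the finite set $A$; in particular $\me V_1<\infty$.

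The core step is the two-sided estimate
\[ct^{-1/2}\;\leq\;\mmp_{j_0}\{L_1>t\}\;\leq\;Ct^{-1/2},\qquad t\geq 1.\]
For the upper bound I would write $L_1=T_1+\cdots+T_{V_1}$, with $T_i$ the length of the $i$-th excursion of $X$ away from $A$, and use the classical hitting-time bound $\mmp\{S_1,\dots,S_s\notin A\mid S_0=x\}\leq C(|x|\vee 1)/\sqrt s$ for a centered, finite-variance random walk $(S_k)$ (a Sparre~Andersen / ballot-type estimate) together with $\me|\eta_y|<\infty$ to obtain $\mmp\{T_i>s\mid Y(i-1)=y\}\leq C/\sqrt s$ uniformly in $y\in A$. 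A union bound over cycles, exploiting the geometric tail of $V_1$, then yields the upper bound on $\mmp\{L_1>t\}$. For the lower bound, starting from $j_0$ there is positive probability that $\eta_{j_0}\notin A$, and then the random walk stays outside $A$ for at least $t$ more steps with probability $\geq c/\sqrt t$ by the classical lower persistence estimate.

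With these estimates in hand, set $\widetilde L_i:=L_i\wedge n$ and $\widetilde M(n):=\inf\{i:\widetilde L_1+\cdots+\widetilde L_i>n\}$. Since $\widetilde L_i\leq L_i$, we have $M(n):=\inf\{i:R_i>n\}\leq\widetilde M(n)$. Wald's identity applied to the bounded i.i.d.\ $\widetilde L_i$, combined with $\widetilde L_i\leq n$, gives $\me\widetilde L_1\cdot\me\widetilde M(n)\leq 2n$, while the lower tail estimate gives $\me\widetilde L_1=\int_0^n\mmp\{L_1>t\}\,{\rm d}t\geq c'\sqrt n$. Hence $\me M(n)\leq\me\widetilde M(n)\leq C\sqrt n$. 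Finally, since $\{M(n)\geq i\}=\{R_{i-1}\leq n\}$ depends only on $L_1,\dots,L_{i-1}$ and is therefore independent of $V_i$, a Wald-type computation gives
\[\me\Big[\sum_{i=1}^{M(n)}V_i\Big]=\me V_1\cdot\me M(n)=O(\sqrt n),\]
and the bound $\nu(n)\leq 1+\sum_{i=1}^{M(n)}V_i$ yields $\me\nu(n)=O(\sqrt n)$.

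The main obstacle will be the \emph{two-sided} tail estimate for $L_1$. The lower bound is essential: the degenerate example $L_i\equiv 1$ satisfies the upper tail bound trivially but violates the $O(\sqrt n)$ renewal bound. If the lower persistence estimate proves awkward to derive elementarily in this generality, I would fall back on Erickson's renewal theorem for i.i.d.\ increments with tail regularly varying of index $-1/2$, which gives $\me M(n)\sim c''\sqrt n$ directly and bypasses the truncation step.
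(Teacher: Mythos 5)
Your proposal is correct and follows essentially the same route as the paper: a regenerative decomposition at returns to a state of $A$, a lower bound of order $t^{-1/2}$ on the tail of the excursion length outside $A$ obtained from the Sparre~Andersen persistence probabilities of the centered finite-variance walk, and Erickson's inequality (which your truncated-Wald step in effect reproves) to get $\me M(n)=O(\sqrt n)$, combined with a finite expected occupation of $A$ per cycle (the paper controls the latter via exponential moments of the exit time from $A$ after hitting each $j\in A$, you via the return time of the induced chain $(Y(k))$ --- an organizational difference only). Two minor blemishes that do not affect validity: the claimed upper bound $\mmp\{L_1>t\}\le Ct^{-1/2}$ is never used and would acquire logarithmic corrections under your sketched union bound, and the pre-regeneration contribution for a general starting point should be bounded by the finite expected hitting time of $j_0$ for the finite irreducible chain $(Y(k))$ rather than attributed to part~(a).
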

\begin{proof}
Part (a) is obvious. Passing to the proof of part (b), for each
$j\in A=\{-m,\ldots, m\}$, we set
$$
\zeta^{(j)}_0:=\inf\{i\in\mn: X(i)=j \}$$ and
$$
\wt \zeta^{(j) }_k=\inf\{i> \zeta^{(j)}_k: |X(i)|>m\},\quad
\zeta^{(j) }_{k+1}=\inf\{i>\wt\zeta^{(j) }_k: X(i)=j\},\quad
k\in\mn
$$
with the standard convention that the infimum of the empty set
equals $+\infty$. Plainly, the so defined random variables are
stopping times w.r.t.\ the filtration generated by
$(X(k))_{k\in\mn_0}$. Furthermore, the random vectors $\{(\wt
\zeta^{(j) }_k- \zeta^{(j) }_k, \zeta^{(j) }_{k+1}- \wt \zeta^{(j)
}_k)\}_{k\in\mn}$ are i.i.d.

For typographical ease, we assume that $|X(0)|=|x|>m$ hereafter.
If the first entrance into $A$ following the $(l-1)$-st exit from
$A$, $l\in\mn$, occurs at the state $j_l$, then
$$\nu(n)\leq \sum_{l\geq 1} (\wt \zeta^{(j_l) }_{l-1}-
\zeta^{(j_l)}_{l-1})\1_{\{\zeta^{(j_l)}_{l-1}\leq
n\}}\quad\text{a.s.}$$ Hence $$\nu(n)\leq \sum_{|j|\leq m}
\sum_{k\geq 0}(\wt \zeta^{(j) }_k-
\zeta^{(j)}_k)\1_{\{\zeta^{(j)}_k\leq n\}}\leq \sum_{|j|\leq m}
\sum_{k\geq 0}(\wt \zeta^{(j) }_k- \zeta^{(j)}_k)
 \1_{\{(\zeta^{(j) }_{1}- \wt \zeta^{(j)}_0)+\ldots+ (\zeta^{(j) }_k- \wt \zeta^{(j)}_{k-1})\leq
 n\}}$$ and thereupon $$\me \nu(n)\leq \sum_{|j|\leq m}\me (\wt
\zeta^{(j) }_0- \zeta^{(j)}_0) \sum_{k\geq 0} \mmp\{(\zeta^{(j)
}_{1}- \wt \zeta^{(j)}_0)+\ldots+ (\zeta^{(j) }_k- \wt
\zeta^{(j)}_{k-1})\leq
 n\}$$ because, for $k\in\mn$, $\wt \zeta^{(j) }_k-
\zeta^{(j)}_k$ is independent of $\1_{\{(\zeta^{(j) }_{1}-
\wt\zeta^{(j)}_0)+\ldots+ (\zeta^{(j) }_k- \wt
\zeta^{(j)}_{k-1})\leq n\}}$ and has the same distribution as $\wt
\zeta^{(j) }_0- \zeta^{(j)}_0$. Thus, to complete the proof it
suffices to check that, for fixed $j\in A$,
\begin{equation}\label{1} \me (\wt \zeta^{(j) }_0- \zeta^{(j)}_0)<\infty
\end{equation}
and
\begin{equation} \label{2}
\underset{n\to\infty}{\lim\sup}\,n^{1/2}\sum_{k\geq 0}
\mmp\{(\zeta^{(j) }_{1}- \wt \zeta^{(j)}_0)+\ldots+ (\zeta^{(j)
}_k- \wt \zeta^{(j)}_{k-1})\leq n\}<\infty.
\end{equation}

\noindent {\sc Proof of \eqref{1}}. By using the mathematical
induction we can check that
$$\mmp\{\wt \zeta^{(j) }_0- \zeta^{(j)}_0>s\}\leq
\mmp\{|\eta_j+j|\leq m\}\big(\mmp\{\min_\ast|\eta_k+k|\leq
m\}\big)^{s-1},\quad s\in\mn,$$ where we write
$\underset{\ast}{\min}$ to mean that the minimum is taken over all
integer $k\in [-m,m]$ for which $\mmp\{|\eta_k+k|\leq m\}<1$. Such
indices $k$ do exist in view of the irreducibility. Thus, not only
does \eqref{1} hold, but also some exponential moments of $\wt
\zeta^{(j) }_0- \zeta^{(j)}_0$ are finite.

\noindent {\sc Proof of \eqref{2}}. Noting that
$$\{\pm X(\wt\zeta_0^{(j)})>m,\,\pm \xi_{\wt\zeta_0^{(j)}+1}\geq 0,\ldots, \pm \xi_{\wt\zeta_0^{(j)}+1}\pm\ldots\pm\xi_{\wt\zeta_0^{(j)}+n}\geq 0 \}
\subseteq \{\pm X(\wt\zeta_0^{(j)})>m,\,\zeta_1^{(j)}-\wt
\zeta^{(j)}_0>n\}$$ for $n\in\mn$ and setting
$p_j:=\mmp\{X(\wt\zeta_0^{(j)})>m\}$, we arrive at
\begin{eqnarray}\label{31}
\mmp\{\zeta_1^{(j)}-\wt \zeta^{(j)}_0>n\}&\geq& p_j
\mmp\{\xi_{\wt\zeta_0^{(j)}+1}\geq 0,\ldots,
\xi_{\wt\zeta_0^{(j)}+1}+\ldots+ \xi_{\wt\zeta_0^{(j)}+n}\geq
0\}\\&+&(1-p_j)\mmp\{\xi_{\wt\zeta_0^{(j)}+1}\leq 0,\ldots,
\xi_{\wt\zeta_0^{(j)}+1}+\ldots+\xi_{\wt\zeta_0^{(j)}+n}\leq
0\}\notag.
\end{eqnarray}
Observe that $(\xi_{\wt\zeta_0^{(j)}+1}+\ldots+
\xi_{\wt\zeta_0^{(j)}+k})_{k\in\mn}$ is a standard random walk.
Its jumps have zero mean and finite variance because these have
the same distribution as $\xi_1$. Hence
\begin{equation}\label{32}
\lin n^{1/2}\mmp\{\xi_{\wt\zeta_0^{(j)}+1}\geq 0,\ldots,
\xi_{\wt\zeta_0^{(j)}+1}+\ldots+ \xi_{\wt\zeta_0^{(j)}+n}\geq
0\}=c_+\in(0,\infty),
\end{equation}
\begin{equation}\label{33}
\lin n^{1/2}\mmp\{\xi_{\wt\zeta_0^{(j)}+1}\leq
0,\ldots,\xi_{\wt\zeta_0^{(j)}+1}+\ldots+
\xi_{\wt\zeta_0^{(j)}+n}\leq 0\}=c_-\in (0,\infty)
\end{equation}
(see, for instance, pp.\ 381-382 in \cite{BGT}). Using Erickson's
inequality (Lemma 1 in \cite{Erickson:1973}) we infer
$$\sum_{k\geq 0} \mmp\{(\zeta^{(j) }_{1}- \wt
\zeta^{(j)}_0)+\ldots+ (\zeta^{(j) }_k- \wt \zeta^{(j)}_{k-1})\leq
n\}\leq {2n\over \me \big((\zeta_1^{(j)}-\wt\zeta_0^{(j)})\wedge
n\big)}\leq {2\over \mmp\{\zeta_1^{(j)}-\wt\zeta_0^{(j)}>n\}}$$
which in combination with \eqref{31}, \eqref{32} and \eqref{33}
gives
$$\lim\sup_{n\to\infty}n^{1/2}\sum_{k\geq 0}
\mmp\{(\zeta^{(j) }_{1}- \wt \zeta^{(j)}_0)+\ldots+ (\zeta^{(j)
}_k- \wt \zeta^{(j)}_{k-1})\leq n\}\leq {2\over
p_jc_++(1-p_j)c_-}<\infty.$$ The proof of Lemma \ref{rate} is
complete.
\end{proof}

\begin{proof}[Proof of Lemma \ref{lem_comp}]
Weak relative compactness and continuity of the limit follow if we
can check that either of the sequences $(X_n^\pm)$, $(M_n^\pm)$
and $(L_n^\pm)$ is weakly relatively compact, and that their weak
limit points are continuous processes. Actually, verification for
$(L_n^\pm)$ is not needed, for (a) the absolute value of the last
summand in \eqref{eq_predst} does not exceed $m$; (b) $\sup_{t\geq
0}\,|X_n^\pm(t)-\wt X_n^\pm(t)|\leq m/\sqrt{n}$, where $$\wt
X^\pm_n(t):={\wt X^\pm([nt])\over \sqrt{n}},\quad t\geq 0.$$
Further, it is clear that instead of $(X_n^\pm)$ and $(M_n^\pm)$
we can work with $(X_n)$ and $(M_n)$, where, as usual,
$M_n:=M_n^+-M_n^-$.

According to Theorem 15.5 in \cite{Billingsley:1968} it suffices
to prove that
\begin{equation*}
\underset{\delta\to
0}{\lim}\,\underset{n\to\infty}{\lim\sup}\,\mmp\big\{\underset{|t-s|\leq
\delta,\, t,s\in [0,T]}{\sup}\,|X_n(t)-X_n(s)|>\varepsilon\big\}=0
\end{equation*}
and that
\begin{equation*}
\underset{\delta\to
0}{\lim}\,\underset{n\to\infty}{\lim\sup}\,\mmp\big\{\underset{|t-s|\leq
\delta,\, t,s\in [0,T]}{\sup}\,|M_n(t)-M_n(s)|>\varepsilon\big\}=0
\end{equation*}
for any $\varepsilon>0$ or, which is equivalent, that
\begin{equation}\label{eq1old}
\underset{\delta\to
0}{\lim}\,\underset{n\to\infty}{\lim\sup}\,\mmp\big\{\underset{|i-j|\leq
[\delta n],\, i,j\in
[0,[nT]]}{\sup}\,|X(i)-X(j)|>\varepsilon\sigma \sqrt{n}\big\}=0,
\end{equation}
\begin{equation}\label{eq1old1}
\underset{\delta\to
0}{\lim}\,\underset{n\to\infty}{\lim\sup}\,\mmp\big\{\underset{|i-j|\leq
[\delta n],\, i,j\in [0,[nT]]}{\sup}\,|M(i)-
M(j)|>\varepsilon\sigma \sqrt{n}\big\}=0.
\end{equation}
Furthermore, if $(X_n)$ and/or $(M_n)$ converge along a
subsequence, the corresponding limits have continuous versions.

Define a random sequence $(X^\ast(k))_{k\in\mn_0}$ by
\be\label{eq_X} X^\ast(k):=S(r(k)) +\sum_{|j|\leq
m}\sum_{i=1}^{r_j(k)}\eta_{j,\,i},\quad k\in\mn_0, \ee where
$$
r(0):=0,\quad r(n):=\sum_{i=0}^{n-1}\1_{\{|X^\ast(i)|>m\}}$$ and,
for each $j\in A=\{-m,\ldots,m\}$,
$$r_j(0):=0,\quad r_j(n)=\sum_{i=0}^{n-1}\1_{\{X^\ast(i)=j\}}.
$$
Then $(X^\ast(k))_{k\in\mn_0}$ is a Markov chain with
$X^\ast(0)=x$ and the same transition probabilities as the Markov
chain $(X(k))_{k\in\mn_0}$. Hence the distributions of the two
Markov chains are the same. This particularly implies that
\begin{equation}\label{eqd}
\sum_{|j|\leq m}r_j(n)\od
\nu(n-1)=\sum_{k=0}^{n-1}\1_{\{|X(k)|\leq m\}}
\end{equation}
for each $n\in\mn$, where $\od$ denotes equality of distributions.
Further, observe that
$$M(n)=\sum_{k=1}^n \xi_k\1_{\{|X(k-1)|>m\}}=\sum_{i=1}^n\big(X(i)-X(i-1)\big)\1_{\{|X(i-1)|>m\}},\quad
n\in\mn_0$$ and
$$M^\ast(n):=S(r(n))-x=\sum_{i=1}^n\big(X^\ast(i)-X^\ast(i-1)\big)\1_{\{|X^\ast(i-1)|>m\}},\quad
n\in\mn_0.$$ Since the sequences $(X(n))_{n\in\mn_0}$ and
$(X^\ast(n))_{n\in\mn_0}$ have the same distribution, so do
$(M(n))_{n\in\mn_0}$ and $(M^\ast(n))_{n\in\mn_0}$.

Relation \eqref{eq1old1} is a consequence of the following
\begin{eqnarray*}
&&\underset{\delta\to
0}{\lim}\,\underset{n\to\infty}{\lim\sup}\,\mmp\big\{\underset{|i-j|\leq
[\delta n],\, i,j\in
[0,[nT]]}{\sup}\,|M(i)-M(j)|>\varepsilon\sigma \sqrt{n}\big\}\\&=&
\underset{\delta\to
0}{\lim}\,\underset{n\to\infty}{\lim\sup}\,\mmp\big\{\underset{|i-j|\leq
[\delta n],\, i,j\in [0,[nT]]}{\sup}\,|M^\ast(i)-
M^\ast(j)|>\varepsilon\sigma \sqrt{n}\big\}\\&\leq&
\underset{\delta\to
0}{\lim}\,\underset{n\to\infty}{\lim\sup}\,\mmp\big\{\underset{|i-j|\leq
[\delta n],\, i,j\in [0,[nT]]}{\sup}\,|S(j)-S(i)
|>\varepsilon\sigma \sqrt{n}\big\}=0,
\end{eqnarray*}
where the last equality is implied by \eqref{0}.

Turning to the proof of \eqref{eq1old} we first show that, for any
$0\leq i,j \leq [nT]$,
\begin{equation}\label{22}
\underset{|i-j|\leq [\delta n]}{\sup}\,|X^\ast(i)-X^\ast(j)|\leq
2m+ 2\underset{|i-j|\leq [\delta
n]}{\sup}\,|S(i)-S(j)|+\max_{|l|\leq m}\max_{1\leq k\leq
r_l([nT])}|\eta_{l,\,k}|\quad \text{a.s.}
\end{equation}
By symmetry it is sufficient to investigate the case $0\leq
i<j\leq [nT]$.

\noindent If $|X^\ast(i)|\leq m$ and $|X^\ast(j)|\leq m$, then
$|X^\ast(i)-X^\ast(j)|\leq 2m$ a.s.

\noindent If $j-i \leq[\delta n]$ and $|X^\ast(k)|>m$ for all
$k\in\{i,\dots,j\}$, then $|X^\ast(i)-X^\ast(j)|\leq
\underset{|i'-j'|\leq [\delta n]}{\sup}\,|S(i')-S(j')|$.

\noindent Finally, assume that $j-i \leq [\delta n]$,
$X^\ast(i)>m$ and $X^\ast(j)<-m$ (the case $X^\ast(i)<-m$ and
$X^\ast(j)>m$ can be treated analogously). Set $\alpha:=\inf\{k>i:
X^\ast(k)\in [-m,m]\}$ and $\beta:=\sup\{k<j:
X^\ast(k)\in[-m,m]\}$. Then
\begin{eqnarray*}
|X^\ast(i)-X^\ast(j)|&\leq&  |X^\ast(i)-X^\ast(\alpha)| +
|X^\ast(\alpha)-X^\ast(\beta)|+
|X^\ast(\beta)-X^\ast(\beta+1)|\\&+&|X^\ast(\beta+1)-X^\ast(j)|\leq
2\underset{|i'-j'|\leq [\delta
n]}{\sup}\,|S(i')-S(j')|+2m\\&+&\max_{|l|\leq m}\max_{1\leq k\leq
r_l([nT])}|\eta_{l,\,k}|.
\end{eqnarray*}
Thus, \eqref{22} holds which entails
\begin{eqnarray*}\label{eq1old_2}
&&\mmp\big\{\underset{|i-j|\leq [\delta n],\, i,j\in
[0,[nT]]}{\sup}\,|X(i)- X(j)|>\varepsilon\sigma
\sqrt{n}\big\}\\&=& \mmp\big\{\underset{|i-j|\leq [\delta n],\,
i,j\in [0,[nT]]}{\sup}\,|  X^\ast(i)- X^\ast(j)|>\varepsilon\sigma
\sqrt{n}\big\} \\&\leq& \mmp\big\{2m+2\underset{|i-j|\leq [\delta
n],\, i,j\in [0,[nT]]}{\sup}\,|   S(i)-   S(j)| + \max_{|l|\leq
m}\max_{1\leq k\leq r_l([nT])}|\eta_{l,k}|>\varepsilon\sigma
\sqrt{n}\big\}.
\end{eqnarray*}

In view of \eqref{0} to complete the proof of \eqref{eq1old} it
remains to check that \be\label{eq_vysk} n^{-1/2}\max_{|l|\leq
m}\max_{1\leq k\leq r_l([nT])}|\eta_{l,\,k}|\tp 0,\quad
n\to\infty. \ee Using Boole's inequality (twice) and Markov's
inequality yields
\begin{eqnarray*}
\mmp\{ n^{-1/2}\max_{|l|\leq m} \max_{1\leq k\leq
r_l([nT])}|\eta_{l,\,k}|>\varepsilon\sqrt{n}\}&\leq&
\mmp\bigg\{\sum_{|j|\leq
m}r_j([nT])>x\sqrt{n}\bigg\}\\&+&\sum_{|l|\leq
m}\mmp\{\underset{1\leq k\leq
[x\sqrt{n}]+1}{\max}\,|\eta_{l,\,k}|>\varepsilon\sqrt{n}\}\\&\leq&
x^{-1}n^{-1/2}\me \sum_{|j|\leq
m}r_j([nT])\\&+&([x\sqrt{n}]+1)\sum_{|l|\leq
m}\mmp\{|\eta_{l,\,1}|>\varepsilon\sqrt{n}\}.
\end{eqnarray*}
Sending first $n\to\infty$ (taking into account \eqref{eqd}
together with Lemma \ref{rate} and the assumption $\lin
n\mmp\{|\eta_{l,1}|>n\}=0$) and then $x\to\infty$ we arrive at
\eqref{eq_vysk}.

It remains to prove \eqref{eq_null}. To this end, note that any
limit point $( X_\infty^\pm, M_\infty^\pm, L_\infty^\pm)$
satisfies
$$
X_\infty(t):=X^+_\infty(t)-X^-_\infty(t)=M^+_\infty(t)-M^-_\infty(t) + L^+_\infty(t)-L^-_\infty(t)=:M_\infty(t)+L_\infty(t).
$$
Representation \eqref{eq_X} together with Lemma \ref{rate} implies
that $M_\infty$ is a Brownian motion. Another appeal to
\eqref{eq_X} allows us to conclude that $L_\infty$ is a continuous
process of locally bounded variation. Hence \eqref{eq_null}
follows from the occupation time formula (Corollary 1.6 of Chapter
6 in \cite{Revuz+Yor:1999}) because $\langle X_\infty \rangle
(t)=\langle M_\infty\rangle(t)=t$ (see Proposition 1.18 of Chapter
4 in \cite{Revuz+Yor:1999}). The proof of Lemma \ref{lem_comp} is
complete.
\end{proof}

\begin{proof}[Proof of Lemma \ref{lem_conv}]
1) Since the prelimit processes $L_n^\pm$ are a.s.\ nondecreasing,
so are $L^\pm_\infty$.

For each $\ve>0$, denote by $f_\ve(x)$ a continuous nonnegative
functions that satisfies $f(x)=1$ for $x\geq \ve$, and
$f_\ve(x)=0$ for $x\leq \ve/2$. To prove \eqref{eq_XL} it is
sufficient to check that
$$
\int_0^T f_\ve(X^\pm_\infty(s)){\rm d}L^\pm_\infty(s)=0\quad
\mbox{a.s.}
$$
for each $\ve>0$ and then use $\lim_{\ve\to
0}f_\ve(x)=\1_{(0,\,\infty)}(x)$ together with Lebesgue's
dominated convergence theorem.

By Skorokhod's representation theorem there exist versions of the
original processes which converge a.s. Furthermore, the
convergence is locally uniform, for the limit processes are a.s.\
continuous. Hence we have (for versions)
$$
\int_0^T f_\ve({X^\pm_\infty(s)}){\rm
d}L^\pm_\infty(s)=\lim_{k\to\infty} \int_0^T
f_\ve({X^\pm_{n_k}(s)}){\rm d}L^\pm_{n_k}(s)=0\quad \mbox{a.s.}
$$
as desired.

\noindent 2) We only give the proof for $M^+_\infty$. We have to
check that (I) $(M^+_\infty(t))_{t\in [0,\,T]}$ is a martingale;
(II) $((M^+_\infty(t))^2-A(t))_{t\in [0,\,T]}$ is a martingale
where $A(t):=\sigma^2\int_0^t\1_{\{X^+_\infty(s)>0\}}{\rm d}s$,
$t\geq 0$. We concentrate on the proof of (II), for the proof of
(I) is similar but simpler.

Set $X_\infty:=X_\infty^+-X_\infty^-$. Observe that the
$\sigma$-algebra $\sigma(X_\infty(s), s\leq t)$ is generated by a
family of random variables
$$\{f(X_\infty(t_1),\cdots,X_\infty(t_j)) \ | \ j\in \mn,\ 0\leq
t_1<t_2<\dots<t_j\leq t, \ f\in C_b(\mr^j)\},$$ where $C_b(\mr^j)$
is the set of bounded continuous real-valued functions defined on
$\mr^j$. It thus suffices to verify
\begin{equation}\label{eq_mart}
\me
f\big(X_\infty(t_1),\dots,X_\infty(t_j)\big)\big((M_\infty^+(t))^2-A(t)-(M_\infty^+(t_j))^2+A(t_j)\big)=0
\end{equation}
for any $t\in [0,T]$, and $j\in\mn$, any $0\leq
t_1<t_2<\dots<t_j\leq t$ and any function $f\in C_b(\mr^j)$.

Put $\cF_0:=\{\oslash, \Omega\}$,
$\cF_k:=\sigma(X(i),\xi_i)_{1\leq i\leq k }$, $k\in\mn$ and
$${\cE}_k(n):=n^{-1}\bigg(\sum_{i=1}^k
\1_{\{X(i-1)>m\}}\xi_i\bigg)^2- \sigma^2 n^{-1} \sum_{i=1}^{k}
\1_{\{X (i-1)>m\}}.$$ Since $(\cE_k(n))_{k\in\mn_0}$ is a
martingale w.r.t. $(\cF_i)_{i\in\mn_0}$ we infer
\begin{eqnarray*}
\me \left(\big(M_n^+(t^{(n)})\big)^2-\sigma^2
\int_0^{t^{(n)}}\1_{\{X^+_n(s)>0\}}{\rm
d}s\bigg|\cF_{[nt_j]}\right)&=&\me \left({\cE}_{[nt]}
|\cF_{[nt_j]}\right)={\cE}_{[nt_j]}\\&=&(M_n^+(t_j^{(n)}))^2-\sigma^2\int_0^{t_j^{(n)}}\1_{\{X^+_n(s)>0\}}{\rm
d}s,
\end{eqnarray*}
where $t^{(n)}_k:=[nt_k]/n$, $k=1,\ldots, j$, $t^{(n)}:=[nt]/n$.
Hence
\begin{equation}\label{eq_mart1} \me
f\big(X_n(t^{(n)}_1),\ldots,
X_n(t^{(n)}_j)\big)\bigg(\big(M_n^+(t^{(n)})\big)^2-\big(M_n^+(t^{(n)}_j)\big)^2-\sigma^2
\int_{t^{(n)}_j}^{t^{(n)}}\1_{\{X^+_n(s)>0\}}{\rm d}s\bigg)=0.
\end{equation}
Fix $s\in [0,T]$. The sequence
$\bigg(\big(M_n^+(t^{(n)})\big)^2-\big(M_n^+(t^{(n)}_j)\big)^2-\sigma^2
\int_{t^{(n)}_j}^{t^{(n)}}\1_{\{X^+_n(s)>0\}}{\rm
d}s\bigg)_{n\in\mn}$ is uniformly integrable if we can show that
\begin{equation}\label{ineq}
\sup_{k\in\mn}\,\me {\cE}^2_{[ns]}(n)<\infty.
\end{equation}
The expression under the expectation sign in \eqref{eq_mart1},
with $n$ replaced by $n_k$, converges weakly, as $k\to\infty$, to
the expression under the expectation in \eqref{eq_mart}, whence
equality \eqref{eq_mart} follows by the aforementioned uniform
integrability.

While proving \eqref{ineq}, we assume, for simplicity, that $s=1$.
By the Marcinkiewicz-Zygmund inequality for martingales (Theorem 9
in \cite{Burkholder:1966})
\begin{equation}\label{burk}
\me {\cE}^2_n(n) \leq C\me \sum_{k=1}^n Z_k(n)^2
\end{equation}
for some constant $C>0$ which does not depend on $n$, where
$(Z_k(n))_{k\in\mn}$ are martingale differences defined by
$$Z_k(n):=n^{-1}\bigg((\xi_k^2-\sigma^2)\1_{\{X(k-1)>m\}}+2\xi_k\1_{\{X(k-1)>m\}}\sum_{i=1}^{k-1}\xi_i\1_{\{X(i-1)>m\}}\bigg),\quad
k\in\mn$$ (with the convention that $\sum_{i=1}^0\ldots=0$).
Setting $r:=\me (\xi_1^2-\sigma^2)^2<\infty$ we have
\begin{eqnarray*}
n^2\me Z_k(n)^2&\leq& 2\bigg(\me
(\xi_k^2-\sigma^2)\1_{\{X(k-1)>m\}}+4\me
\xi_k^2\1_{\{X(k-1)>m\}}\bigg(\sum_{i=1}^{k-1}\xi_i\1_{\{X(i-1)>m\}}\bigg)^2\bigg)\\&\leq&
2\bigg(r+4\sigma^2\me\bigg(\sum_{i=1}^{k-1}\xi_i\1_{\{X(i-1)>m\}}\bigg)^2\bigg)\leq
2(r+4\sigma^4(k-1))
\end{eqnarray*}
Using the last inequality and \eqref{burk} we arrive at
\eqref{ineq}. The proof of Lemma \ref{lem_conv} is complete.

\end{proof}

\begin{proof}[Proof of Lemma \ref{strong}]
Fix $x\in A$. It suffices to prove that the convergence holds
$\mmp_x$-a.s.\ rather than a.s. The subsequent proof is similar to
the proof of the strong law of large numbers for Markov chains
(see, for instance, p.~87 in \cite{Durrett:1999}). We only treat
$\rho_k:=\rho_k^+$.

Put $T_x^{(0)}:=0$ and, for $k\in\mn$, denote by $T_x^{(k)}$ the
time of the $k$th return of $(Y_j)$ to $x$. Also, for $k\in\mn$,
we set $\theta_k(x):=\sum_{j=T_x^{(k-1)}}^{T_x^{(k)}-1}\rho_j$ and
observe that the random variables $\theta_1(x)$,
$\theta_2(x),\ldots$ are independent and $\mmp_x$-identically
distributed. We have
\begin{eqnarray*}
\me_x \theta_1(x)&=&\sum_{y\in A}\sum_{j\geq 0}\me_x
\big(\big(Y(j+1)-Y(j)\big)\1_{\{X(\alpha_j+1)\leq m\}}\\&+&
\big(X(\alpha_j+1)-Y(j)\big)\1_{\{X(\alpha_j+1)>m\}}|Y(j)=y\big)\mmp\{Y(j)=y,
T_x^{(1)}>j\}\\&=& \sum_{y\in A}\me
\big(\big(Y(1)-Y(0)\big)\1_{\{X(1)\leq m\}}\\&+&
\big(X(1)-Y(0)\big)\1_{\{X(1)>m\}}|Y(0)=y\big)\sum_{j\geq
0}\mmp\{Y(j)=y, T_x^{(1)}>j\}\\&=&\me_xT_x^{(1)}\sum_{y\in A}\pi_y
\me \big(\big(Y(1)-Y(0)\big)\1_{\{X(1)\leq m\}}+
\big(X(1)-Y(0)\big)\1_{\{X(1)>m\}}|Y(0)=y\big)\\&=&\me_xT_x^{(1)}\me_\pi
\big(\big(X(\alpha_1)-Y(0)\big)\1_{\{X(1)\leq m\}}+
\big(X(1)-Y(0)\big)\1_{\{X(1)>m\}}\big)\\&=&\me_xT_x^{(1)}\me_\pi
\big(X(1)-X(\alpha_1) \big)\1_{\{X(1)>m\}}\\&=&
\me_xT_x^{(1)}\me_\pi \big(X(1)-X(\alpha_1) \big)^+
\end{eqnarray*}
having utilized Theorem 8.2 on p.~84 in \cite{Durrett:1999} for
the third equality, the last equality being a consequence of the
fact that on the event $\{X(1)<-m\}$ one has $X(1)<X(\alpha_1)$,
while on $\{X(1)\in [-m,m]\}$ one has $X(1)=X(\alpha_1)$. Using
the strong laws of large numbers for random walks and renewal
processes yields
$${\sum_{k=1}^{N_n(x)}\theta_k(x)\over n}= {N_n(x)\over
n}{\sum_{k=1}^{N_n(x)}\theta_k(x) \over N_n(x)}\quad \to\quad
\me_\pi \big(X(1)-X(\alpha_1)\big)^+,\quad n\to\infty$$
$\mmp_x$-a.s., where $N_n(x):=\#\{k\in\mn: T_x^{(k)}\leq n\}$. It
remains to note that
$$\bigg|\sum_{k=1}^n \rho_k-\sum_{k=1}^{N_n(x)}\theta_k(x)\bigg|\leq |\theta_{N_n(x)+1}(x)|\leq \max_{1\leq j\leq
n+1}\,|\theta_j(x)|,$$ and that, as $n\to\infty$, the right-hand
side divided by $n$ converges to zero $\mmp_x$-a.s.\ in view of
$\me |\theta_1(x)|<\infty$ and the Borel-Cantelli lemma.
\end{proof}

\end{document}